\newtheorem{thm}{Theorem}[section]
\newtheorem{prop}[thm]{Proposition}
\newtheorem{cor}[thm]{Corollary}
\newtheorem{conj}[thm]{Conjecture}
\numberwithin{equation}{section}
\renewcommand{\thefootnote}{}
\begin{document}

\begin{center}
{\large\bf $q$-Supercongruences from the $q$-Saalsch\"{u}tz identity

 \footnote{ The corresponding author$^*$. Email addresses: weichuanan78@163.com (C. Wei), lydshdx@163.com (Y. Liu), xiaoxiawang@shu.edu.cn (X. Wang).}}
\end{center}

\renewcommand{\thefootnote}{$\dagger$}

\vskip 2mm \centerline{$^{1}$Chuanan Wei, $^{2}$Yudong Liu, $^{2}$Xiaoxia Wang$^*$}
\begin{center}
{$^{1}$School of Biomedical Information and Engineering,\\ Hainan Medical University, Haikou 571199, China\\
$^{2}$Department of Mathematics,\\ Shanghai University, Shanghai 200444, China}
\end{center}


\vskip 0.7cm \noindent{\bf Abstract.} In terms of the
$q$-Saalsch\"{u}tz identity and the Chinese remainder theorem for
coprime polynomials, we establish some $q$-supercongruences modulo the
third power of a cyclotomic polynomial. In particular, we give a
$q$-analogue of a formula due to Long and Ramakrishna [Adv. Math.
290 (2016), 773--808].

\vskip 3mm \noindent {\it Keywords}: basic hypergeometric series;
$q$-Saalsch\"{u}tz identity; $q$-supercongruence

 \vskip 0.2cm \noindent{\it AMS
Subject Classifications:} 33D15; 11A07; 11B65

\section{Introduction}
For any complex variable $x$, define the shifted-factorial by
\[(x)_{0}=1\quad \text{and}\quad (x)_{n}
=x(x+1)\cdots(x+n-1)\quad \text{when}\quad n\in\mathbb{Z}^{+}.\] Let
$p$ be an odd prime and $\mathbb{Z}_p$ denote the ring of all
$p$-adic integers. Define Morita's $p$-adic Gamma function (cf.
\cite[Chapter 7]{Robert}) to be
 \[\Gamma_{p}(0)=1\quad \text{and}\quad \Gamma_{p}(n)
=(-1)^n\prod_{\substack{1\leqslant k< n\\
p\nmid k}}k,\quad \text{when}\quad n\in\mathbb{Z}^{+}.\]
Noting $\mathbb{N}$ is a dense subset of $\mathbb{Z}_p$ associated with the $p$-adic norm $|\cdot|_p$, for each
$x\in\mathbb{Z}_p$, the definition of $p$-adic Gamma function can be extended as
 \[\Gamma_{p}(x)
=\lim_{\substack{n\in\mathbb{N}\\
|x-n|_p\to0}}\Gamma_{p}(n).\]
 Two important properties of the $p$-adic Gamma function can be expressed as follows:
  \begin{equation*}
\frac{\Gamma_p(x+1)}{\Gamma_p(x)}=
\begin{cases}
 -x, &\text{if $p\nmid x$,}\\[10pt]
-1, &\text{if $p\,|\,x$,}
 \end{cases}
 \end{equation*}
  \begin{equation*}
\Gamma_p(x)\Gamma_p(1-x)=(-1)^{\langle-x\rangle_p-1},
 \end{equation*}
where $\langle x\rangle_p$ indicates the least nonnegative residue of $x$ modulo $p$, i.e., $\langle x\rangle_p\equiv x\pmod p$
and $\langle x\rangle_p\in\{0,1,\ldots,p-1\}$.
  In 2006, Long and Ramakrishna
\cite[Proposition 25]{LR} proved that, for any prime $p$,
\begin{equation}\label{eq:long}
\sum_{k=0}^{p-1}\frac{(1/3)_k^3}{k!^3}\equiv
\begin{cases} \displaystyle \Gamma_p(1/3)^6  \pmod{p^3}, &\text{if $p\equiv 1\pmod 6$,}\\[10pt]
\displaystyle -\frac{p^2}{3}\Gamma_p(1/3)^6\pmod{p^3}, &\text{if
$p\equiv 5\pmod 6$.}
\end{cases}
\end{equation}

 For any complex numbers $x$ and $q$, define the $q$-shifted factorial
 by
 \begin{equation*}
(x;q)_{0}=1\quad\text{and}\quad
(x;q)_n=(1-x)(1-xq)\cdots(1-xq^{n-1})\quad \text{when}\quad
n\in\mathbb{Z}^{+}.
 \end{equation*}
For simplicity, we also adopt the compact notation
\begin{equation*}
(x_1,x_2,\dots,x_m;q)_{n}=(x_1;q)_{n}(x_2;q)_{n}\cdots(x_m;q)_{n}.
 \end{equation*}
Following Gasper and Rahman \cite{Gasper}, define the basic
hypergeometric series $_{r+1}\phi_{r}$ to be
$$
_{r+1}\phi_{r}\left[\begin{array}{c}
a_1,a_2,\ldots,a_{r+1}\\
b_1,b_2,\ldots,b_{r}
\end{array};q,\, z
\right] =\sum_{k=0}^{\infty}\frac{(a_1,a_2,\ldots, a_{r+1};q)_k}
{(q,b_1,b_2,\ldots,b_{r};q)_k}z^k.
$$
Then the $q$-Saalsch\"{u}tz identity (cf. \cite[Appendix
(II.12)]{Gasper}) can be stated as
\begin{align}
& _{3}\phi_{2}\!\left[\begin{array}{cccccccc}
 q^{-n}, a, b\\
  c,  q^{1-n}ab/c
\end{array};q,\, q \right]
=\frac{(c/a, c/b;q)_{n}}
 {(c, c/ab;q)_{n}}. \label{saal}
\end{align}

Recently, Guo \cite[Theorem 1.1]{Guo-fac} found that, for positive integers $d,n,r$ such that $d\geq2$, $n\equiv-r\pmod d$, $n\geq d-r$,
 $r\leq d-2$, and $\gcd(d,r)=1$,
\begin{align}
&\sum_{k=0}^{n-1}\frac{(q^r;q^d)_k^d}{(q^d;q^d)_k^d}q^{dk}
 \equiv0\pmod{\Phi_n(q)^2}.
\label{eq:guo-a}
\end{align}
Here and throughout the paper, $\Phi_n(q)$ stands for
the $n$-th cyclotomic polynomial in $q$:
\begin{equation*}
\Phi_n(q)=\prod_{\substack{1\leqslant k\leqslant n\\
\gcd(k,n)=1}}(q-\zeta^k),
\end{equation*}
where $\zeta$ is an $n$-th primitive root of unity. For more
$q$-analogues of supercongruences, we refer the reader to
\cite{Guo-adb,Guo-rima,Guo-ijnt,GS20,GS20c,GS,GuoZu,LW,LP,Tauraso,WY-a,Zu19}.

Motivated by the work just mentioned, we shall establish the
following two theorems.

\begin{thm}\label{thm-a}
Let $d\geq 2$ and $n$ be positive integers with $n\equiv
1\pmod d$. Then, modulo $\Phi_n(q)^3$,
\begin{align*}
\sum_{k=0}^{(n-1)/d}\frac{(q;q^d)_k^3}{(q^3;q^d)_k(q^d;q^d)_k^2}q^{dk}
&\equiv
q^{(n-1)/d}\frac{(q^2,q^{d-1};q^d)_{(n-1)/d}}{(q^3,q^d;q^d)_{(n-1)/d}}
\\[5pt]
&\times\bigg\{1+[n]^2\sum_{i=1}^{(n-1)/d}\frac{q^{di-d+2}}{[di-d+2]^2}\bigg\},
\end{align*}
where  $[n]=(1-q^n)/(1-q)$ is the $q$-integer.
\end{thm}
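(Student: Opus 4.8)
The plan is to read the left-hand side as a truncated balanced ${}_3\phi_2$-series in the base $q^d$, to evaluate its terminating companion by the $q$-Saalsch\"utz identity \eqref{saal}, and then to descend to the congruence modulo $\Phi_n(q)^3$ using only that $q^{-n}\equiv1\pmod{\Phi_n(q)}$. Put $m=(n-1)/d$ and $Q=q^d$, so the summand is
\[
u_k=\frac{(q;Q)_k^3}{(q^3;Q)_k(Q;Q)_k^2}Q^k ,
\]
and $\sum_{k\ge0}u_k={}_3\phi_2[q,q,q;q^3,Q;Q,Q]$ is balanced (the parameters satisfy $q^3\cdot Q=Q\cdot q\cdot q\cdot q$, with $z=Q$) but not terminating. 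Replacing one upper $q$ by $q^{1-n}=Q^{-m}$ forces termination at $k=m$, and replacing the lower $Q$ by $q^{d-n}$ restores the balance; more generally, inserting a free parameter $a$ and applying \eqref{saal} in base $Q$ with $\alpha=aq$, $\beta=q/a$, $\gamma=q^3$ gives the exact evaluation
\[
\sum_{k=0}^{m}\frac{(aq,q/a,q^{1-n};q^d)_k}{(q^3,q^{d-n},q^d;q^d)_k}q^{dk}=\frac{(q^2/a,aq^2;q^d)_m}{(q^3,q;q^d)_m},
\]
whose specialisation at $a=1$ evaluates the terminating companion $\tilde S$ of our sum in closed form.

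Next I would make the passage from $\tilde S$ back to $S$ quantitative. Writing $\tilde u_k$ for the companion summand (the display above at $a=1$), one has $u_k=\tilde u_kR_k$ with
\[
R_k=\frac{(q;Q)_k\,(q^{d-n};Q)_k}{(q^{1-n};Q)_k\,(Q;Q)_k}.
\]
Setting $\epsilon=q^{-n}-1=q^{-n}(1-q)[n]$, so that $\epsilon\equiv0\pmod{\Phi_n(q)}$ and $\epsilon^2$ carries the factor $[n]^2$, I would expand each local factor of $R_k$ as a geometric series in $\epsilon$ and collect terms:
\[
R_k=1+(A_k-B_k)\,\epsilon+C_k\,\epsilon^2+O(\epsilon^3),\qquad A_k=\sum_{j=0}^{k-1}\frac{q^{1+dj}}{1-q^{1+dj}},\quad B_k=\sum_{j=1}^{k}\frac{q^{dj}}{1-q^{dj}},
\]
where $C_k$ is the explicit second-order coefficient built from $A_k,B_k$ and the sums of squares of the same local weights. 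Since $\Phi_n(q)\mid[n]$ for $n>1$, one has $\epsilon^3\equiv0\pmod{\Phi_n(q)^3}$, so this expansion is exact to the order needed, and it is the quadratic term that will produce the factor $[n]^2$ in the statement (the case $n=1$ being trivial).

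It then remains to evaluate, modulo the appropriate power of $\Phi_n(q)$, the weighted sums produced at each order: $\sum_k\tilde u_k$ (order $0$, the $q$-Saalsch\"utz value), $\sum_k\tilde u_k(A_k-B_k)$ (needed modulo $\Phi_n(q)^2$), and $\sum_k\tilde u_kC_k$ (needed only modulo $\Phi_n(q)$, where one may already replace $\tilde u_k$ by $u_k$). Each of these harmonic-weighted $q$-Saalsch\"utzian sums is accessible by differentiating parametric forms of \eqref{saal} at the symmetric specialisation, where the first derivative vanishes by the $a\mapsto1/a$ symmetry. I would then normalise the $q$-Saalsch\"utz prefactor into the shape $q^{(n-1)/d}\frac{(q^2,q^{d-1};q^d)_{(n-1)/d}}{(q^3,q^d;q^d)_{(n-1)/d}}$; clearing its denominators, which are coprime to $\Phi_n(q)$, and combining the residues obtained at the separate orders into one congruence modulo $\Phi_n(q)^3$, are exactly what the Chinese remainder theorem for coprime polynomials licenses. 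A telescoping of the surviving weights, which matches the $q$-logarithmic derivative of the factor $(q^2;q^d)_{(n-1)/d}$ in that prefactor, collapses the remaining double sum to $\sum_{i=1}^{(n-1)/d}\frac{q^{di-d+2}}{[di-d+2]^2}$.

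The main obstacle is precisely this last bookkeeping. One must show that the zeroth- and first-order contributions recombine to reproduce the prefactor exactly, leaving no term linear in $[n]$, and that the quadratic contributions telescope to the stated closed form rather than to an unwieldy double sum; controlling the weights $A_k,B_k$ to the correct order, that is, replacing $q^{1-n},q^{d-n}$ by $q,q^d$ while tracking the induced $O(\Phi_n(q))$ corrections, is the delicate point, and it is here that the explicit right-hand side of \eqref{saal} together with the coprimality argument does the real work.
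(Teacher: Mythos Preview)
Your approach is genuinely different from the paper's, and it has a real gap at the step you yourself flag as ``the delicate point''.

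The paper does \emph{not} set $a=1$ and then expand in $\epsilon=q^{-n}-1$. Instead it keeps \emph{two} free parameters $a$ and $b$ (with a third, $c$, for bookkeeping): setting $a=q^{\pm tn}$ makes one of $(aq;q^d)_k,(q/a;q^d)_k$ the terminating factor and Saalsch\"utz applies, giving a congruence modulo $(1-aq^{tn})(a-q^{tn})$; setting $b=q^{tn}$ makes $(q/b;q^d)_k$ the terminating factor and Saalsch\"utz applies again, giving a congruence modulo $(b-q^{tn})$. These moduli are \emph{coprime as polynomials in $a,b$}, so the Chinese remainder theorem combines them into a single congruence modulo $(1-aq^{tn})(a-q^{tn})(b-q^{tn})$. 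Only then does one specialise $b\to1$, $c\to q^3$, obtaining a congruence modulo $\Phi_n(q)(1-aq^n)(a-q^n)$, and finally $a\to1$ via L'H\^opital. The L'H\^opital step computes the second $a$-derivative of $(aq^2,q^2/a;q^d)_{(n-1)/d}$ at $a=1$, which is exactly $(q^2;q^d)_{(n-1)/d}^2\sum_i q^{di-d+2}/[di-d+2]^2$; no telescoping of double harmonic sums is needed.

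Your plan stalls because the weighted sums $\sum_k\tilde u_k(A_k-B_k)$ and $\sum_k\tilde u_kC_k$ are never actually evaluated. Differentiating your parametric Saalsch\"utz in $a$ perturbs the factors $(aq;Q)_k(q/a;Q)_k$ and produces weights of the form $\sum_j q^{1+dj}/(1-q^{1+dj})$ with the $a\leftrightarrow1/a$ symmetry; but your $A_k-B_k$ comes from perturbing $(q^{1-n};Q)_k$ in the numerator and $(q^{d-n};Q)_k$ in the denominator, which are \emph{different} factors. To perturb those while still having a closed Saalsch\"utz evaluation, termination must be supplied by the $a$-factor instead---and that is precisely the paper's second application of Saalsch\"utz at $a=q^{\pm n}$, which you have not used. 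Separately, your invocation of the Chinese remainder theorem is misplaced: the moduli $\Phi_n(q),\Phi_n(q)^2,\Phi_n(q)^3$ are not coprime, so CRT says nothing about combining information at successive orders in $\epsilon$; what you would need there is a straightforward Taylor expansion, and the difficulty is not the combining but the evaluating. The paper's two-parameter route sidesteps both issues at once.
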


\begin{thm}\label{thm-b}
Let $n$ be a positive integer with $n\equiv
2\pmod 3$. Then, modulo $\Phi_n(q)^3$,
\begin{align*}
\sum_{k=0}^{(2n-1)/3}\frac{(q;q^3)_k^3}{(q^3;q^3)_k^3}q^{3k} &\equiv
q^{(2n-1)/3}\frac{(q^2;q^3)_{(2n-1)/3}^2}{(q^3;q^3)_{(2n-1)/3}^2}
\bigg\{1-[2n]^2\sum_{i=1}^{(2n-1)/3}\frac{q^{3i-1}}{[3i-1]^2}\bigg\}.
\end{align*}
\end{thm}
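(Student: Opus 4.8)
The plan is to pass to base $q^3$, write $M=(2n-1)/3$ (an integer exactly because $n\equiv2\pmod3$, and satisfying $2n=3M+1$), and introduce a free parameter $a$ so as to exhibit the left-hand side as a perturbation of a summable series. Concretely, I would study
\[
S(a)=\sum_{k=0}^{M}\frac{(q^{1-2n},aq,q;q^3)_k}{(q^3,q^3,aq^{3-2n};q^3)_k}\,q^{3k},
\]
whose lower parameters are arranged to make it a terminating, balanced ${}_3\phi_2$ in base $q^3$ (the factor $(q^{1-2n};q^3)_k=(q^{-3M};q^3)_k$ forces termination at $k=M$). Replacing $q$ by $q^3$ in the $q$-Saalsch\"utz identity \eqref{saal} then evaluates it in closed form,
\[
S(a)=\frac{(q^2/a,q^2;q^3)_M}{(q^3,q/a;q^3)_M}.
\]
The key observation is the specialization $a=q^{2n}$: using $2n=3M+1$, a short telescoping shows that the closed form collapses \emph{exactly} (not merely modulo $\Phi_n(q)$) to the product
\[
S(q^{2n})=q^{(2n-1)/3}\,\frac{(q^2;q^3)_{M}^{2}}{(q^3;q^3)_{M}^{2}},
\]
which is precisely the prefactor on the right-hand side of the theorem.

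Next I would compare the theorem's left-hand side with $S(q^{2n})$ summand by summand. The two summands differ only by the factor $(q;q^3)_k^2/\big[(q^{1-2n};q^3)_k\,(q^{1+2n};q^3)_k\big]$, and the elementary identity
\[
(1-q^{1-2n+3j})(1-q^{1+2n+3j})=(1-q^{1+3j})^2-q^{1+3j}\frac{(q^{2n}-1)^2}{q^{2n}}
\]
shows that this factor equals $1$ plus a quantity whose expansion in $q^{2n}-1$ begins at second order, the first-order term vanishing identically. Since $\Phi_n(q)\mid q^n-1\mid q^{2n}-1$, the full discrepancy $\Delta:=(\text{LHS})-S(q^{2n})$ is therefore divisible by $\Phi_n(q)^2$; this vanishing of the first-order term is the structural reason the correction in the theorem carries $[2n]^2$ rather than a single power of $[n]$.

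The remaining and hardest step is to pin down $\Delta$ modulo $\Phi_n(q)^3$, i.e.\ to identify its leading $\Phi_n(q)^2$-coefficient and show it equals $-S(q^{2n})\,[2n]^2\sum_{i=1}^{M}q^{3i-1}/[3i-1]^2$. Expanding the correction factor presents $\Delta$ as a weighted double sum $\sum_k(\text{hypergeometric term})\sum_{j<k}q^{1+3j}/(1-q^{1+3j})^2$, and the task is to collapse this into $S(q^{2n})$ times a single sum. The natural device is creative microscoping: differentiate the parametric evaluation of $S(a)$ with respect to $a$ and specialize near $a=q^{2n}$, so that the logarithmic derivative of $(q^2/a;q^3)_M/(q/a;q^3)_M$ produces a single sum whose denominators, after the rewriting $1-q^{1-3l}=-q^{1-3l}(1-q^{3l-1})$, become $(1-q^{3i-1})^2$ and hence $[3i-1]^2$. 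To legitimize the passage from this parametric identity to the modulus $\Phi_n(q)^3$ I would establish the underlying congruence modulo the coprime polynomials $1-aq^{n}$ and $a-q^{n}$ separately and recombine them via the Chinese remainder theorem for coprime polynomials before letting $a\to q^{2n}$. Assembling the three ingredients gives $(\text{LHS})=S(q^{2n})+\Delta\equiv S(q^{2n})\{1-[2n]^2\sum_{i=1}^{M}q^{3i-1}/[3i-1]^2\}$, which is the asserted right-hand side. I expect the genuine obstacle to be exactly this second-order bookkeeping: reconciling the two a priori different single-sum representations of the correction (denominator exponents $\equiv1$ versus $\equiv2\pmod3$) requires systematic use of $q^n\equiv1\pmod{\Phi_n(q)}$ to reindex, and this is where the argument must be carried out most carefully.
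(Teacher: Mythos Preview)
Your opening is clean and correct: the Saalsch\"utz evaluation of $S(a)$, the exact specialization $S(q^{2n})=q^{(2n-1)/3}(q^2;q^3)_M^2/(q^3;q^3)_M^2$, and the termwise identity showing $\Delta:=\text{LHS}-S(q^{2n})$ is divisible by $(1-q^{2n})^2$ and hence by $\Phi_n(q)^2$ are all fine.

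The gap is in the last step. Your $S(a)$ is summable for \emph{every} $a$, which is elegant, but for that very reason it does not interpolate between the theorem's left-hand side and $S(q^{2n})$: at $a=1$ one gets $\sum_k (q^{1-2n},q,q;q^3)_k/(q^3,q^3,q^{3-2n};q^3)_k\,q^{3k}$, not the desired $\sum_k (q;q^3)_k^3/(q^3;q^3)_k^3\,q^{3k}$. Consequently, differentiating $S(a)$ near $a=q^{2n}$ tells you about $\partial_a S$, not about $\Delta$; moreover, a single $a$-derivative produces inner sums with simple factors $1/(1-q^{\cdots})$, whereas $\Delta$ involves $1/(1-q^{1+3j})^2$. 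Your concluding CRT remark is also off: the relevant coprime moduli are $1-aq^{2n}$ and $a-q^{2n}$ (not $1-aq^n$, $a-q^n$), and the limit that recovers the theorem's sum is $a\to 1$, not $a\to q^{2n}$. As written, you have reduced the problem to collapsing the double sum
\[
\sum_{k=0}^{M}\frac{(q;q^3)_k^3}{(q^3;q^3)_k^3}q^{3k}\sum_{j=0}^{k-1}\frac{q^{1+3j}}{(1-q^{1+3j})^2}
\equiv -S(q^{2n})\sum_{i=1}^{M}\frac{q^{3i-1}}{(1-q^{3i-1})^2}\pmod{\Phi_n(q)},
\]
and you have not supplied a mechanism for this.

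The paper sidesteps this difficulty by a \emph{two}-parameter deformation $\sum_k (aq,q/a,q/b;q^3)_k/(q^3,c,q^{6}/bc;q^3)_k\,q^{3k}$. Saalsch\"utz applies both when $a=q^{\pm 2n}$ and when $b=q^{2n}$; combining the three evaluations via the Chinese remainder theorem gives a congruence modulo $(1-aq^{2n})(a-q^{2n})(b-q^{2n})$. Letting $b\to 1$ and then $a\to 1$ (the latter via L'H\^opital, which is what manufactures the single sum $\sum_i q^{3i-1}/[3i-1]^2$) yields the result directly, with no double sum to collapse. If you want to salvage your route, the missing ingredient is precisely this second parameter: introduce the family $T(a)=\sum_k (aq,q/a,q;q^3)_k/(q^3;q^3)_k^3\,q^{3k}$, note $T(1)=\text{LHS}$ and $T(q^{2n})=S(q^{2n})$, and use Saalsch\"utz at $a=q^{\pm 2n}$ together with your exact value of $S(q^{2n})$ to pin down $T(1)-T(q^{2n})$ modulo $\Phi_n(q)^3$.
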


Although we don't discover the general form of Theorem \ref{thm-b} including the parameter $d$, it is not difficult to understand that Theorem \ref{thm-a} with $d=3$
and Theorem \ref{thm-b} give a $q$-analogue of \eqref{eq:long}.

Letting $n=p$ be an odd prime and taking $q\to 1$ in the above two
theorems, we obtain the following conclusion.
\begin{cor}\label{cor-b}
Let $p$ be an odd prime such that $p\equiv t\pmod{3}$ with $t\in\{1,2\}$. Then
\begin{align}\label{eq:wei-a}
\sum_{k=0}^{(tp-1)/3}\frac{(1/3)_k^3}{k!^3} \equiv
\frac{(2/3)_{(tp-1)/3}^2}{(1)_{(tp-1)/3}^2}\bigg\{1+(-1)^{t-1}(tp)^2\sum_{i=1}^{(tp-1)/3}\frac{1}{(3i-1)^2}\bigg\}\pmod{p^3}.
\end{align}
\end{cor}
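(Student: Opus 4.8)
The plan is to obtain the corollary by setting $n=p$ in the two theorems and letting $q\to1$, splitting according to $t$: when $p\equiv1\pmod 3$ (so $t=1$) I would invoke Theorem~\ref{thm-a} with $d=3$ and $n=p$, and when $p\equiv2\pmod 3$ (so $t=2$) I would invoke Theorem~\ref{thm-b} with $n=p$. In either case the upper summation limit, $(n-1)/d$ respectively $(2n-1)/3$, equals $(tp-1)/3$, so the ranges already agree with \eqref{eq:wei-a}. Note also that for $d=3$ the left-hand denominator of Theorem~\ref{thm-a} collapses, $(q^3;q^3)_k(q^3;q^3)_k^2=(q^3;q^3)_k^3$, so both theorems carry the same summand $\frac{(q;q^3)_k^3}{(q^3;q^3)_k^3}q^{3k}$ on the left.

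The arithmetic bridge is the factorization $\Phi_p(q)=1+q+\cdots+q^{p-1}=[p]$, valid because $p$ is prime, which gives $\Phi_p(1)=p$. Since $q=1$ is not a root of $\Phi_p$, a congruence $A(q)\equiv B(q)\pmod{\Phi_p(q)^3}$ between rational functions whose reduced denominators take values coprime to $p$ at $q=1$ descends, on putting $q=1$, to $A(1)\equiv B(1)\pmod{p^3}$. Indeed, writing $A(q)-B(q)=P(q)/Q(q)$ in lowest terms over $\mathbb{Z}[q]$ with $\Phi_p\nmid Q$, the hypothesis forces $\Phi_p(q)^3\mid P(q)$, say $P=\Phi_p^3 S$ with $S\in\mathbb{Z}[q]$, whence $A(1)-B(1)=P(1)/Q(1)=p^3\,S(1)/Q(1)$ with $Q(1)$ a $p$-adic unit.

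To carry out the limits I would use $[m]=(1-q^m)/(1-q)\to m$ together with the elementary evaluation $\frac{(q^a;q^d)_k}{(q^d;q^d)_k}\to\frac{(a/d)_k}{k!}$ as $q\to1$. Consequently each summand $\frac{(q;q^3)_k^3}{(q^3;q^3)_k^3}q^{3k}$ of the left-hand side tends to $\frac{(1/3)_k^3}{k!^3}$, so both left-hand sides become $\sum_{k=0}^{(tp-1)/3}\frac{(1/3)_k^3}{k!^3}$. On the right, the prefactor tends to $\frac{(2/3)_{(tp-1)/3}^2}{(1)_{(tp-1)/3}^2}$, the factor $[n]^2$ (respectively $[2n]^2$) tends to $(tp)^2$, and $\frac{q^{3i-1}}{[3i-1]^2}\to\frac{1}{(3i-1)^2}$; the plus sign in Theorem~\ref{thm-a} and the minus sign in Theorem~\ref{thm-b} are precisely encoded by $(-1)^{t-1}$. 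This reproduces the right-hand side of \eqref{eq:wei-a}.

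The step needing care, and the main obstacle, is to confirm that the reduced denominators stay coprime to $p$ at $q=1$, so that the reduction of the second paragraph applies. Because every $q$-factorial appearing has index below $p$, none of the factors $(q^3;q^3)_k$ contributes a factor $\Phi_p(q)$, and they evaluate at $q=1$ to integers coprime to $p$. The one delicate point is the summand of Theorem~\ref{thm-b} with $3i-1\equiv0\pmod p$, which occurs exactly at $i=(p+1)/3$---an integer precisely because $p\equiv2\pmod 3$---where $[3i-1]=[p]=\Phi_p(q)$ sits in the denominator. Here the accompanying factor $[2n]^2=[2p]^2=\Phi_p(q)^2(1+q^p)^2$ cancels the offending $\Phi_p(q)^2$, so this summand equals the polynomial $q^p(1+q^p)^2$ and contributes the finite value $4$ at $q=1$; every other $[3i-1]$ has $3i-1$ coprime to $p$, and in the case $t=1$ one has $3i-1\le p-2$ throughout so no such cancellation is even needed. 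With integrality secured in both cases, putting $q=1$ yields \eqref{eq:wei-a}.
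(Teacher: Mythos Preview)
Your argument is correct and follows exactly the route the paper indicates: specialize $n=p$ (with $d=3$ in Theorem~\ref{thm-a}) and let $q\to1$, using $\Phi_p(1)=p$. The paper states this in a single sentence without justification; you supply the missing details, in particular the verification that the common denominator is a $p$-adic unit at $q=1$ and the cancellation of the apparent $\Phi_p(q)^2$ pole at $i=(p+1)/3$ against $[2p]^2$ in the $t=2$ case, which the paper leaves implicit.
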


In order to explain the equivalence of \eqref{eq:long}
 and \eqref{eq:wei-a}, we need to verify the
following relations.

\begin{prop}\label{prop-a}
Let $p$ be an odd prime. Then
\begin{align}
&\frac{(2/3)_{(p-1)/3}^2}{(1)_{(p-1)/3}^2}\bigg\{1+p^2\sum_{i=1}^{(p-1)/3}\frac{1}{(3i-1)^2}\bigg\}\equiv
\Gamma_p(1/3)^6\pmod{p^3} \label{eq:wei-oa}
\end{align}
if $p\equiv 1\pmod 3$, and
\begin{align}
&\frac{(2/3)_{(2p-1)/3}^2}{(1)_{(2p-1)/3}^2}\bigg\{1-4p^2\sum_{i=1}^{(2p-1)/3}\frac{1}{(3i-1)^2}\bigg\}\equiv
-\frac{p^2}{3}\Gamma_p(1/3)^6\pmod{p^3}
 \label{eq:wei-ob}
\end{align}
if $p\equiv 2\pmod 3$.
\end{prop}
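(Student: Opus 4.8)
The plan is to convert both sides into Morita's $p$-adic Gamma function and then peel off an elementary harmonic-sum congruence. The engine is the telescoped functional equation $\Gamma_p(x+1)/\Gamma_p(x)=-x$ (valid for $p\nmid x$), which for a $p$-adic integer $a$ yields
\[
(a)_n=(-1)^n\frac{\Gamma_p(a+n)}{\Gamma_p(a)}\prod_{\substack{0\le k<n\\ p\,\mid\,a+k}}(a+k).
\]
First I would record which shifted arguments are divisible by $p$. Writing $m=(p-1)/3$ and $M=(2p-1)/3$: for $p\equiv1\pmod3$ none of the factors of $(2/3)_m$ or $(1)_m=m!$ is divisible by $p$, whereas for $p\equiv2\pmod3$ exactly one factor of $(2/3)_M$ equals $p/3$ (at the index where $3i-1=p$). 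Combined with the reflection formula, which gives $\Gamma_p(1/3)\Gamma_p(2/3)=(-1)^{\langle-1/3\rangle_p-1}$ and hence $\Gamma_p(1/3)/\Gamma_p(2/3)^2=\Gamma_p(1/3)^3$, these conversions produce
\[
\frac{(2/3)_m}{m!}=-\frac{\Gamma_p(1/3+p/3)}{\Gamma_p(2/3)\,\Gamma_p(2/3+p/3)},\qquad \frac{(2/3)_M}{M!}=-\frac{p}{3}\cdot\frac{\Gamma_p(1/3+2p/3)}{\Gamma_p(2/3)\,\Gamma_p(2/3+2p/3)}.
\]

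For the case $p\equiv1\pmod3$ I would expand the shifted Gamma values to second order in $t=p/3$, using the local analyticity of $\Gamma_p$: with $\psi_p=\Gamma_p'/\Gamma_p$,
\[
\frac{\Gamma_p(x+t)}{\Gamma_p(x)}=1+t\,\psi_p(x)+\frac{t^2}{2}\bigl(\psi_p'(x)+\psi_p(x)^2\bigr)+O(t^3).
\]
Differentiating the reflection formula once gives $\psi_p(1/3)=\psi_p(2/3)$, so the first-order and the $\psi_p^2$ contributions cancel in the ratio; differentiating twice gives $\psi_p'(2/3)=-\psi_p'(1/3)$. Squaring then yields $(2/3)_m^2/(m!)^2=\Gamma_p(1/3)^6\bigl(1+\tfrac{2p^2}{9}\psi_p'(1/3)+O(p^3)\bigr)$, so that \eqref{eq:wei-oa} collapses to the single congruence $\psi_p'(1/3)\equiv-\tfrac{9}{2}S\pmod p$, with $S=\sum_{i=1}^m 1/(3i-1)^2$. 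To prove this I would telescope the once-differentiated functional equation $\psi_p'(x+1)=\psi_p'(x)-1/x^2$ from $x=2/3$ to $x=2/3+m=(p+1)/3$, giving $\psi_p'((p+1)/3)=\psi_p'(2/3)-9S$; combining with $\psi_p'(2/3)=-\psi_p'(1/3)$ and the continuity estimate $\psi_p'((p+1)/3)\equiv\psi_p'(1/3)\pmod p$ (since $(p+1)/3\equiv1/3\pmod p$) produces $2\psi_p'(1/3)\equiv-9S\pmod p$, as required.

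For $p\equiv2\pmod3$ the decisive point is a change of nature of the harmonic sum. The extracted factor $p/3$ forces $(2/3)_M^2/(M!)^2$ to have $p$-adic valuation $2$, and the sum $S'=\sum_{i=1}^M 1/(3i-1)^2$ now contains the singular term at $i_0=(p+1)/3$, where $3i_0-1=p$. Isolating it gives $4p^2S'=4+4p^2\widetilde{S}'$ with $\widetilde{S}'\in\mathbb{Z}_p$, so $1-4p^2S'\equiv-3\pmod{p^2}$. Since the whole expression already carries the factor $p^2$, only the leading order is needed; the same reflection argument (now using only $\psi_p(1/3)=\psi_p(2/3)$ to kill the first-order term) gives $(2/3)_M/M!=-\tfrac{p}{3}\Gamma_p(1/3)^3\bigl(1+O(p^2)\bigr)$, whence
\[
\frac{(2/3)_M^2}{(M!)^2}\bigl(1-4p^2S'\bigr)=\frac{p^2}{9}\Gamma_p(1/3)^6\bigl(-3+O(p^2)\bigr)\equiv-\frac{p^2}{3}\Gamma_p(1/3)^6\pmod{p^3},
\]
which is \eqref{eq:wei-ob}.

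The main obstacle is the $p$-adic bookkeeping rather than any isolated hard step: justifying the second-order expansion of $\Gamma_p$ with a genuine $O(p^3)$ remainder (so that the degree-three term is controlled, which needs $p\ge5$), correctly evaluating $\langle-1/3\rangle_p$, and --- most importantly --- recognizing that the harmonic sum is a $p$-adic integer when $p\equiv1$ (forcing the delicate second-order identity $\psi_p'(1/3)\equiv-\tfrac{9}{2}S$) but acquires a genuine pole when $p\equiv2$ (which alone supplies the constant $-3$, hence the factor $-p^2/3$). Checking that the expansion and the two reflection-derivative identities hold as analytic identities on the residue disc $1/3+p\mathbb{Z}_p$ is the technical heart of the argument.
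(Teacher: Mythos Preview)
Your argument is correct, and the opening move---rewriting the Pochhammer quotient via $\Gamma_p$ and Taylor-expanding to second order in $p/3$---is exactly what the paper does. The two proofs diverge at the point where one must identify the coefficient $G_2(1/3)-G_1(1/3)^2=\psi_p'(1/3)$ with a harmonic sum modulo $p$. The paper imports the formulas for $G_1(1/3)$ and $G_2(1/3)$ from Pan--Tauraso--Wang \cite[Theorem~4.1]{PTW} together with the identity $G_2(0)=G_1(0)^2$, deduces $G_2(1/3)-G_1(1/3)^2\equiv -H_{(2p-2)/3}^{(2)}\pmod p$, and then shows by elementary reindexing that $\sum_{i\le (p-1)/3}(3i-1)^{-2}\equiv \tfrac{2}{9}H_{(2p-2)/3}^{(2)}$. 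Your route is more self-contained: you differentiate the reflection formula twice to get $\psi_p'(2/3)=-\psi_p'(1/3)$, telescope $\psi_p'(x+1)=\psi_p'(x)-x^{-2}$ from $2/3$ to $(p+1)/3$, and close the loop with the continuity estimate $\psi_p'((p+1)/3)\equiv\psi_p'(1/3)\pmod p$. This avoids the external citation entirely, at the cost of needing the local analyticity of $\Gamma_p$ (which you correctly flag). For $p\equiv 2\pmod 3$ the paper simply declares the proof ``similar'' and omits it; your observation that the term $i=(p+1)/3$ puts a genuine pole $1/p^2$ into the sum, forcing $1-4p^2S'\equiv -3\pmod{p^2}$ and reducing everything to a first-order computation, is a clean and complete replacement for that omission.
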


The rest of the paper is arranged as follows. By means of
\eqref{saal} and the Chinese remainder theorem for coprime
polynomials, a $q$-supercongruence modulo
$(1-aq^{tn})(a-q^{tn})(b-q^{tn})$ will be derived in Section 2. Then
it is used to provide proofs of Theorems \ref{thm-a} and
\ref{thm-b} in the same section. Finally, the proof of Proposition
\ref{prop-a} will be displayed in Section 3.

\section{Proofs of Theorems \ref{thm-a} and \ref{thm-b}}

In order to prove Theorems \ref{thm-a} and \ref{thm-b}, we require the following
united parameter extension of them.

\begin{thm}\label{thm-c}
Let $d\geq 2,n$ be positive integers,  $t\in\{1,d-1\}$ and $n\equiv
t\pmod d$. Then, modulo $(1-aq^{tn})(a-q^{tn})(b-q^{tn})$,
\begin{align}
&\sum_{k=0}^{(tn-1)/d}\frac{(aq,q/a,q/b;q^d)_k}{(q^d,c,q^{d+3}/bc;q^d)_k}q^{dk}
\notag\\[5pt]
&\quad\equiv\,\frac{(1-aq^{tn})(a-q^{tn})}{(a-b)(1-ab)}\frac{(c/aq,ac/q;q^d)_{(tn-1)/d}}{(c,c/q^2;q^d)_{(tn-1)/d}}
\notag\\[5pt]
&\quad+\:\frac{(b-q^{tn})(ab-1-a^2+aq^{tn})}{(a-b)(1-ab)}\frac{(q/b)^{(tn-1)/d}(bc/q,q^{d+2}/c;q^d)_{(tn-1)/d}}{(c,q^{d+3}/bc;q^d)_{(tn-1)/d}}.
\label{eq:wei-aa}
\end{align}
\end{thm}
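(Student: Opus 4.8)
The plan is to prove Theorem~\ref{thm-c} by starting from the $q$-Saalsch\"{u}tz identity \eqref{saal} and then extending the resulting congruence to the triple-factor modulus via the Chinese remainder theorem for coprime polynomials, exactly as advertised in the introduction. First I would recognize that the left-hand side of \eqref{eq:wei-aa} is a terminating $_{3}\phi_{2}$-type sum in base $q^d$, truncated at $(tn-1)/d$, so the natural first move is to specialize \eqref{saal} (with $q$ replaced by $q^d$) in a way that produces the summand $(aq,q/a,q/b;q^d)_k/[(q^d,c,q^{d+3}/bc;q^d)_k]\,q^{dk}$. The issue is that this summand is \emph{not} balanced in general, so the bare $q$-Saalsch\"{u}tz identity does not apply directly. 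I therefore expect the real engine to be evaluation of the sum modulo each of the three linear factors separately: when $q^{tn}=1/a$, when $q^{tn}=a$, and when $q^{tn}=b$, the parameters conspire so that the series either becomes Saalsch\"{u}tzian (and \eqref{saal} evaluates it in closed form) or truncates early, and in each case one checks that the right-hand side of \eqref{eq:wei-aa} reduces to the corresponding closed form.

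\medskip

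Concretely, the three factors $(1-aq^{tn})$, $(a-q^{tn})$, $(b-q^{tn})$ correspond to the substitutions $a=q^{-tn}$, $a=q^{tn}$, and $b=q^{tn}$. For each substitution I would: (i) plug the value into the summand and verify that the $_{3}\phi_{2}$ becomes a genuine $q$-Saalsch\"{u}tz sum in base $q^d$ with upper parameter $q^{-tn}$ forcing termination at $(tn-1)/d$; (ii) apply \eqref{saal} to get a ratio of $q$-shifted factorials; and (iii) confirm that the two-term right-hand side of \eqref{eq:wei-aa} collapses to that same ratio, because one of the two prefactors $(1-aq^{tn})(a-q^{tn})$ or $(b-q^{tn})$ vanishes while the other coefficient simplifies. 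For instance, at $a=q^{tn}$ the factor $(a-q^{tn})$ kills the first term, and the surviving second term must match the Saalsch\"{u}tz evaluation; the symmetric computation handles $a=q^{-tn}$, and the $b=q^{tn}$ substitution kills the first term via the $(b-q^{tn})$ factor. The slightly delicate coefficient $(ab-1-a^2+aq^{tn})$ in the second term is presumably engineered precisely so that all three reductions are simultaneously consistent, and verifying this compatibility is where the bookkeeping concentrates.

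\medskip

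Once the congruence is confirmed modulo each of the three coprime linear factors $1-aq^{tn}$, $a-q^{tn}$, and $b-q^{tn}$ (these are pairwise coprime as polynomials in $q$, provided $a,b,a^2$ and the like are generic), the Chinese remainder theorem for coprime polynomials assembles them into a single congruence modulo the product $(1-aq^{tn})(a-q^{tn})(b-q^{tn})$. The point is that both sides of \eqref{eq:wei-aa} are rational (in fact, after clearing denominators, polynomial) expressions in $q$ with parameters $a,b,c$, and agreeing modulo each factor forces agreement modulo the product. I would need to be careful that the right-hand side as written is the \emph{unique} representative of low enough degree determined by CRT, i.e.\ that the interpolation is rigid; the form of \eqref{eq:wei-aa}, being a linear combination of two Saalsch\"{u}tz-type evaluations with coefficients that are themselves degree-$\leq 1$ in $q^{tn}$, is exactly the CRT interpolant, so this is automatic once the three boundary evaluations check out.

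\medskip

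The main obstacle I anticipate is step (iii): matching the two-term right-hand side to each single-factor evaluation. In the $a=q^{-tn}$ and $a=q^{tn}$ cases one must show the \emph{second} term (with its awkward coefficient) contributes correctly after the first term's prefactor vanishes or survives, and this requires carefully rewriting $(c/aq,ac/q;q^d)_{(tn-1)/d}$ and $(bc/q,q^{d+2}/c;q^d)_{(tn-1)/d}$ under the specialization and using reflection/reversal identities for $q$-shifted factorials to see the two closed forms coincide. Keeping track of the powers $q^{(tn-1)/d}$ and the shifts by $q$, $q^2$, $q^3$ in the various factorials is the error-prone part. After Theorem~\ref{thm-c} is established, Theorems~\ref{thm-a} and \ref{thm-b} should follow by setting $a=b=1$ (so $(1-aq^{tn})(a-q^{tn})(b-q^{tn})$ becomes divisible by $\Phi_n(q)^3$ since $q^{tn}\equiv 1\pmod{\Phi_n(q)}$ and the three factors each contribute one power) and choosing $c$ and $d,t$ appropriately, then extracting the $\Phi_n(q)^3$ reduction via an $L$'Hopital-type limit that generates the harmonic-like sum $\sum 1/[di-d+2]^2$.
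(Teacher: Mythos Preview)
Your approach is essentially the paper's: evaluate the sum at $a=q^{\pm tn}$ and at $b=q^{tn}$ via the $q$-Saalsch\"utz identity in base $q^d$, then glue with the Chinese remainder theorem. Two small corrections. First, you have the roles of the two terms swapped in the $b=q^{tn}$ case: the factor $(b-q^{tn})$ sits in front of the \emph{second} term, so that term vanishes and the \emph{first} term survives with coefficient $(1-aq^{tn})(a-q^{tn})/[(a-b)(1-ab)]\big|_{b=q^{tn}}=1$. Second, the ``main obstacle'' you anticipate in step~(iii) evaporates if you organize the argument as the paper does: instead of checking the right-hand side of \eqref{eq:wei-aa} at each specialization and invoking reflection identities, simply verify the two CRT coefficient congruences
\[
\frac{(b-q^{tn})(ab-1-a^2+aq^{tn})}{(a-b)(1-ab)}\equiv 1\pmod{(1-aq^{tn})(a-q^{tn})},\qquad
\frac{(1-aq^{tn})(a-q^{tn})}{(a-b)(1-ab)}\equiv 1\pmod{(b-q^{tn})},
\]
which are one-line algebra (for the first, note $ab-1-a^2+aq^{tn}=(a-b)(1-ab)/(b-q^{tn})$ once $(1-aq^{tn})(a-q^{tn})=0$, i.e.\ $a+a^{-1}=q^{tn}+q^{-tn}$). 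Then \eqref{eq:wei-aa} is \emph{by construction} the CRT interpolant of the two Saalsch\"utz values, and no matching of $q$-shifted factorials is needed.
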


\begin{proof}
When $a=q^{-tn}$ or $a=q^{tn}$, the left-hand side of
\eqref{eq:wei-aa} is equal to
\begin{align}
\sum_{k=0}^{(tn-1)/d}\frac{(q^{1-tn},q^{1+tn},q/b;q^d)_k}{(q^d,c,q^{d+3}/bc;q^d)_k}q^{dk}
= {_{3}\phi_{2}}\!\left[\begin{array}{cccccccc}
 q^{1-tn},  q^{1+tn}, q/b\\
 c,q^{d+3}/bc
\end{array};q^d,\, q^d \right].
 \label{eq:saal-aa}
\end{align}
Via \eqref{saal}, the right-hand side of \eqref{eq:saal-aa} can be
written as
\begin{align*}
(q/b)^{(tn-1)/d}\frac{(bc/q,q^{d+2}/c;q^d)_{(tn-1)/d}}{(c,q^{d+3}/bc;q^d)_{(tn-1)/d}}.
\end{align*}
Since $(1-aq^{tn})$ and $(a-q^{tn})$ are relatively prime
polynomials, we get the following result: modulo
$(1-aq^{tn})(a-q^{tn})$,
\begin{align}
\sum_{k=0}^{(tn-1)/d}\frac{(aq,q/a,q/b;q^d)_k}{(q^d,c,q^{d+3}/bc;q^d)_k}q^{dk}\equiv
 (q/b)^{(tn-1)/d}\frac{(bc/q,q^{d+2}/c;q^d)_{(tn-1)/d}}{(c,q^{d+3}/bc;q^d)_{(tn-1)/d}}. \label{eq:wei-bb}
\end{align}

When $b=q^{tn}$, the left-hand side of  \eqref{eq:wei-aa} is equal
to
\begin{align}
\sum_{k=0}^{(tn-1)/d}\frac{(aq,q/a,q^{1-tn};q^d)_k}{(q^d,c,q^{d+3-tn}/c;q^d)_k}q^{dk}
= {_{3}\phi_{2}}\!\left[\begin{array}{cccccccc}
 aq,q/a,q^{1-tn}\\
 c,q^{d+3-tn}/c
\end{array};q^d,\, q^d \right].
 \label{eq:saal-bb}
\end{align}
Through \eqref{saal}, the right-hand side of \eqref{eq:saal-bb} can be
evaluated as
\begin{align*}
\frac{(c/aq,ac/q;q^d)_{(tn-1)/d}}{(c,c/q^2;q^d)_{(tn-1)/d}}.
\end{align*}
Therefore, modulo $(b-q^{tn})$,
\begin{align}
\sum_{k=0}^{(tn-1)/d}\frac{(aq,q/a,q/b;q^d)_k}{(q^d,c,q^{d+3}/bc;q^d)_k}q^{dk}\equiv
 \frac{(c/aq,ac/q;q^d)_{(tn-1)/d}}{(c,c/q^2;q^d)_{(tn-1)/d}}.\label{eq:wei-cc}
\end{align}

It is clear that the polynomials $(1-aq^{tn})(a-q^{tn})$ and
$(b-q^{tn})$ are relatively prime. Noting the $q$-congruences
\begin{align*}
&\frac{(b-q^{tn})(ab-1-a^2+aq^{tn})}{(a-b)(1-ab)}\equiv1\pmod{(1-aq^{tn})(a-q^{tn})},
\\[5pt]
&\qquad\qquad\frac{(1-aq^{tn})(a-q^{tn})}{(a-b)(1-ab)}\equiv1\pmod{(b-q^{tn})}
\end{align*}
and employing the Chinese remainder theorem for coprime polynomials,
we arrive at Theorem \ref{thm-c} from \eqref{eq:wei-bb} and
\eqref{eq:wei-cc}.
\end{proof}

\begin{proof}[Proof of Theorem \ref{thm-a}]
Letting $b\to1, c\to q^3, t=1$ in Theorem \ref{thm-c}, we obtain the
formula: modulo $\Phi_n(q)(1-aq^{n})(a-q^{n})$,
\begin{align*}
&\sum_{k=0}^{(n-1)/d}\frac{(aq,q/a,q;q^d)_k}{(q^d,q^d,q^{3};q^d)_k}q^{dk}
\notag\\[5pt]
&\quad\equiv\,q^{(n-1)/d}\frac{(q^2,q^{d-1};q^d)_{(n-1)/d}}{(q^3,q^{d};q^d)_{(n-1)/d}}+\frac{(1-aq^{n})(a-q^{n})}{(1-a)^2}
\notag\\[5pt]
&\quad\quad\times\:\bigg\{q^{(n-1)/d}\frac{(q^2,q^{d-1};q^d)_{(n-1)/d}}{(q^3,q^{d};q^d)_{(n-1)/d}}-\frac{(aq^2,q^{2}/a;q^d)_{(n-1)/d}}{(q,q^{3};q^d)_{(n-1)/d}}\bigg\}.
\end{align*}
In term of the relation:
\begin{align}\label{relation-a}
&q^{(n-1)/d}\frac{(q^{d-1};q^d)_{(n-1)/d}}{(q^{d};q^d)_{(n-1)/d}}=\frac{(q^{2-n};q^d)_{(n-1)/d}}{(q^{1-n};q^d)_{(n-1)/d}}
\equiv\frac{(q^{2};q^d)_{(n-1)/d}}{(q;q^d)_{(n-1)/d}}\pmod{\Phi_n(q)},
\end{align}
we get the conclusion: modulo
$\Phi_n(q)(1-aq^{n})(a-q^{n})$,
\begin{align}
&\sum_{k=0}^{(n-1)/d}\frac{(aq,q/a,q;q^d)_k}{(q^d,q^d,q^{3};q^d)_k}q^{dk}
\notag\\[5pt]
&\quad\equiv\,q^{(n-1)/d}\frac{(q^2,q^{d-1};q^d)_{(n-1)/d}}{(q^3,q^{d};q^d)_{(n-1)/d}}+\frac{(1-aq^{n})(a-q^{n})}{(1-a)^2}
\notag\\[5pt]
&\quad\quad\times\:\bigg\{\frac{(q^2,q^{2};q^d)_{(n-1)/d}}{(q,q^{3};q^d)_{(n-1)/d}}-\frac{(aq^2,q^{2}/a;q^d)_{(n-1)/d}}{(q,q^{3};q^d)_{(n-1)/d}}\bigg\}.
\label{eq:wei-dd}
\end{align}
 By the L'H\^{o}spital rule, we have
\begin{align*}
&\lim_{a\to1}\frac{(1-aq^{n})(a-q^{n})}{(1-a)^2}\bigg\{\frac{(q^2,q^{2};q^d)_{(n-1)/d}}{(q,q^{3};q^d)_{(n-1)/d}}-\frac{(aq^2,q^{2}/a;q^d)_{(n-1)/d}}{(q,q^{3};q^d)_{(n-1)/d}}\bigg\}\\[5pt]
&\quad=[n]^2\frac{(q^2;q^d)_{(n-1)/d}^2}{(q,q^3;q^d)_{(n-1)/d}}\sum_{i=1}^{(n-1)/d}\frac{q^{di-d+2}}{[di-d+2]^2}.
\end{align*}
Letting $a\to1$ in \eqref{eq:wei-dd} and utilizing the above limit,
we get the $q$-supercongruence: modulo $\Phi_n(q)^3$,
\begin{align}
\sum_{k=0}^{(n-1)/d}\frac{(q;q^d)_k^3}{(q^3;q^d)_k(q^d;q^d)_k^2}q^{dk}
&\equiv
q^{(n-1)/d}\frac{(q^2,q^{d-1};q^d)_{(n-1)/d}}{(q^3,q^d;q^d)_{(n-1)/d}}
\notag\\[5pt]
&\quad+[n]^2\frac{(q^2;q^d)_{(n-1)/d}^2}{(q,q^3;q^d)_{(n-1)/d}}\sum_{i=1}^{(n-1)/d}\frac{q^{di-d+2}}{[di-d+2]^2}.
\label{relation-b}
\end{align}
Substituting \eqref{relation-a} into \eqref{relation-b}, we are led to  Theorem
\ref{thm-a}.
\end{proof}

\begin{proof}[Proof of Theorem \ref{thm-b}]
Letting $b\to1, c\to q^3, d=3, t=2$ in Theorem \ref{thm-c}, we arrive at the result: modulo $\Phi_n(q)(1-aq^{2n})(a-q^{2n})$,
\begin{align}
&\sum_{k=0}^{(2n-1)/3}\frac{(aq,q/a,q;q^3)_k}{(q^{3};q^3)_k^3}q^{3k}
\notag\\[5pt]
&\quad\equiv\,q^{(2n-1)/3}\frac{(q^2;q^3)_{(2n-1)/3}^2}{(q^3;q^3)_{(2n-1)/3}^2}+\frac{(1-aq^{2n})(a-q^{2n})}{(1-a)^2}
\notag\\[5pt]
&\quad\quad\times\:\bigg\{q^{(2n-1)/3}\frac{(q^2;q^3)_{(2n-1)/3}^2}{(q^3;q^3)_{(2n-1)/3}^2}-\frac{(aq^2,q^{2}/a;q^3)_{(2n-1)/3}}{(q,q^{3};q^3)_{(2n-1)/3}}\bigg\}.
\label{relation-c}
\end{align}
Similar to the proof of \eqref{relation-b}, we can deduce from \eqref{relation-c} that,
modulo $\Phi_n(q)^3$,
\begin{align}
\sum_{k=0}^{(2n-1)/3}\frac{(q;q^3)_k^3}{(q^3;q^3)_k^3}q^{3k}
&\equiv q^{(2n-1)/3}\frac{(q^2;q^3)_{(2n-1)/3}^2}{(q^3;q^3)_{(2n-1)/3}^2}
\notag\\[5pt]
&\quad+[2n]^2\frac{(q^2;q^3)_{(2n-1)/3}^2}{(q,q^3;q^3)_{(2n-1)/3}}\sum_{i=1}^{(2n-1)/3}\frac{q^{3i-1}}{[3i-1]^2}.
\label{eq:wei-ee}
\end{align}
Moreover, it is routine to verify the congruence:
\begin{align}
(q;q^3)_{(2n-1)/3}&=(1-q)(1-q^4)\cdots(1-q^{2n-3})
\notag\\[5pt]
&\equiv(1-q^{1-2n})(1-q^{4-2n})\cdots(1-q^{-3})
\notag\\[5pt]
&=(-1)^{(2n-1)/3}q^{-(n+1)(2n-1)/3}(q^3;q^3)_{(2n-1)/3}
\notag\\[5pt]
&\equiv-q^{-(2n-1)/3}(q^3;q^3)_{(2n-1)/3}\pmod{\Phi_n(q)}.
\label{eq:wei-ff}
\end{align}
 The combination of
\eqref{eq:wei-ee} and \eqref{eq:wei-ff} produces Theorem
\ref{thm-b}.
\end{proof}
\section{Proof of Proposition \ref{prop-a}}

 Now we begin to prove the supercongruence \eqref{eq:wei-oa}.
Let $\Gamma_p^{'}(x)$ and $\Gamma_p^{''}(x)$ respectively be the
first derivative and second derivative of $\Gamma_p(x)$. By means of
the properties of the $p$-adic Gamma function, we obtain
\begin{align}
\frac{(2/3)_{(p-1)/3}^2}{(1)_{(p-1)/3}^2}&=\bigg\{\frac{\Gamma_p((1+p)/3)\Gamma_p(1)}{\Gamma_p(2/3)\Gamma_p((2+p)/3)}\bigg\}^2
\notag\\[5pt]
&=\big\{\Gamma_p(1/3)\Gamma_p((1+p)/3)\Gamma_p((1-p)/3)\big\}^2
\notag\\[5pt]
&\equiv\Gamma_p(1/3)^2\bigg\{\Gamma_p(1/3)+\Gamma_p^{'}(1/3)\frac{p}{3}+\Gamma_p^{''}(1/3)\frac{p^2}{18}\bigg\}^2
\notag\\[5pt]
&\quad\times\bigg\{\Gamma_p(1/3)-\Gamma_p^{'}(1/3)\frac{p}{3}+\Gamma_p^{''}(1/3)\frac{p^2}{18}\bigg\}^2
\notag\\[5pt]
&\equiv\Gamma_p(1/3)^6\bigg\{1-\frac{2p^2}{9}G_1(1/3)^2+\frac{2p^2}{9}G_2(1/3)\bigg\}\pmod{p^3},
\label{eq:wei-aaa}
\end{align}
where $G_1(x)=\Gamma_p^{'}(x)/\Gamma_p(x)$ and
$G_2(x)=\Gamma_p^{''}(x)/\Gamma_p(x)$.

Let
\[H_{m}
  =\sum_{k=1}^m\frac{1}{k},\quad H_{m}^{(2)}
  =\sum_{k=1}^m\frac{1}{k^{2}},\]
\begin{align*}
\quad
{H}_m^{(2)}(p)=\sum_{\substack{1\leq k\leq m\\
p\nmid k}}\frac{1}{k^{2}},
\quad
\mathfrak{H}_m^{(2)}(p)=\sum_{\substack{1\leq k_1<k_2\leq m\\
p\nmid k_1k_2}}\frac{1}{k_1k_2}.
\end{align*}
 Via the following two relations from Pan, Tauraso and Wang\cite[Theorem 4.1]{PTW}:
\begin{align*}
&G_1(1/3)\equiv G_1(0)+{H}_{(2p-2)/3}\pmod{p},
 \\[5pt]
&G_2(1/3)\equiv
G_2(0)+2G_1(0){H}_{(2p-2)/3}+2\mathfrak{H}_{(2p^2-2)/3}^{(2)}(p)\pmod{p^2}
\end{align*}
 and the equation (cf. \cite[Lemma 4.3]{Wang}):
\begin{align*}
G_2(0)=G_1(0)^2,
\end{align*}
we get
\begin{align}
G_2(1/3)-G_1(1/3)^2&\equiv2\mathfrak{H}_{(2p^2-2)/3}^{(2)}(p)-{H}_{(2p-2)/3}^2
\notag\\[5pt]
&=-{H}_{(2p^2-2)/3}^{(2)}(p)
\notag
\end{align}
\begin{align}
&\equiv-\sum_{i=1}^{(2p-2)/3}\frac{1}{(i+\frac{2p^2-2p}{3})^2}
\notag\\[5pt]
&\equiv-{H}_{(2p-2)/3}^{(2)}\pmod{p}. \label{eq:wei-bbb}
\end{align}
In view of \eqref{eq:wei-aaa} and \eqref{eq:wei-bbb}, the left-hand side of \eqref{eq:wei-oa} is congruent to
\begin{align}
 &\Gamma_p(1/3)^6\bigg\{1-\frac{2p^2}{9}{H}_{(2p-2)/3}^{(2)}\bigg\}\bigg\{1+p^2\sum_{i=1}^{(p-1)/3}\frac{1}{(3i-1)^2}\bigg\}
\notag\\[5pt]
&\quad\equiv
\Gamma_p(1/3)^6\bigg\{1-\frac{2p^2}{9}{H}_{(2p-2)/3}^{(2)}+p^2\sum_{i=1}^{(p-1)/3}\frac{1}{(3i-1)^2}\bigg\}\pmod{p^3}.
 \label{eq:wei-ccc}
\end{align}
It is easy to see that
 \begin{align}
\sum_{i=1}^{(p-1)/3}\frac{1}{(3i-1)^2}&=H_{p-1}^{(2)}-\sum_{i=1}^{(p-1)/3}\frac{1}{(3i-2)^2}-\frac{1}{9}H_{(p-1)/3}^{(2)}
 \notag\\[5pt]
 &\equiv
-\sum_{i=1}^{(p-1)/3}\frac{1}{(3i-2)^2}-\frac{1}{9}H_{(p-1)/3}^{(2)}
\notag\\[5pt]
&=-\sum_{i=1}^{(p-1)/3}\frac{1}{(p-3i)^2}-\frac{1}{9}H_{(p-1)/3}^{(2)}
 \notag\\[5pt]
 &\equiv
-\frac{2}{9}H_{(p-1)/3}^{(2)}
=-\frac{2}{9}\sum_{i=(2p+1)/3}^{p-1}\frac{1}{(p-i)^2}
\notag\\[5pt]
 &\equiv
-\frac{2}{9}\sum_{i=(2p+1)/3}^{p-1}\frac{1}{i^2}
\equiv\frac{2}{9}H_{(2p-2)/3}^{(2)}
 \pmod{p}. \label{eq:wei-ddd}
\end{align}
 Substituting \eqref{eq:wei-ddd} into  \eqref{eq:wei-ccc}, we confirm the correctness of \eqref{eq:wei-oa}. The proof of \eqref{eq:wei-ob}
 admits a similar process. The corresponding details are omitted here.

In 2015, Swisher\cite[(H.3)]{Swisher} conjectured a nice supercongruence: for any prime $p\equiv 1 \pmod{4}$,
\begin{equation*}
\sum_{k=0}^{(p^r-1)/2}\frac{(1/2)_k^3}{k!^3}
\equiv -\Gamma_p(1/4)^4\sum_{k=0}^{(p^{r-1}-1)/2}\frac{(1/2)_k^3}{k!^3}
\pmod {p^{3r}},
\end{equation*}
where $r$ is a positive integer. On the basis of numerical calculations, we would like to put forward
the following conjecture.

\begin{conj}Let $p$ be a prime with $p\equiv 1 \pmod{3}$ and $r$ a positive integer. Then
\begin{align*}
&\sum_{k=0}^{(p^r-1)/3}\frac{(1/3)_k^3}{k!^3}
\equiv \Gamma_p(1/3)^6\sum_{k=0}^{(p^{r-1}-1)/3}\frac{(1/3)_k^3}{k!^3}
\pmod {p^{3r}}.\\
\end{align*}
\end{conj}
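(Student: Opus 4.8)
The plan is to read the conjecture as a \emph{Dwork-type} supercongruence whose $r=1$ instance is precisely the $p\equiv1\pmod6$ branch of Long and Ramakrishna's formula \eqref{eq:long}; indeed, for $r=1$ the right-hand sum degenerates to its $k=0$ term and the statement becomes $\sum_{k=0}^{(p-1)/3}(1/3)_k^3/k!^3\equiv\Gamma_p(1/3)^6\pmod{p^3}$, which is exactly Theorem \ref{thm-a} at $d=3$, $n=p$ read through Proposition \ref{prop-a}. For $r\geq2$ the genuinely new feature is the full shorter sum $\sum_{k=0}^{(p^{r-1}-1)/3}$ on the right together with the sharp modulus $p^{3r}$. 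I would organize the proof around the base-$p$ block structure of the index $k$: writing $k$ in the length-$(p^r-1)/3$ sum according to its residue modulo $p^{r-1}$ and exploiting the valuation jumps of $(1/3)_k/k!$ at the thresholds where $3k+1$ is divisible by a power of $p$, so that the length-$(p^r-1)/3$ sum factors, block by block, through the length-$(p^{r-1}-1)/3$ sum.

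It is worth emphasising what the paper already supplies and what it does not. Theorem \ref{thm-a} with $d=3$ and $n=p^r$ gives a congruence modulo $\Phi_{p^r}(q)^3$, but at $q\to1$ this collapses to modulus $p^3$ only, since $\Phi_{p^r}(1)=p$; thus the $q$-Saalsch\"{u}tz machinery delivers a single cyclotomic level and one cube of $p$, while the conjecture demands $p^{3r}$. The core of any proof is therefore the ascent from $p^3$ to $p^{3r}$. The natural route is a $q$-microscope in the spirit of Theorem \ref{thm-c}: introduce free parameters $a,b$, apply \eqref{saal}, and prove a \emph{parametric} $q$-congruence modulo the product $(1-aq^{p^r})(a-q^{p^r})(b-q^{p^r})$ that links the truncation of $(q;q^3)_k^3/(q^3;q^3)_k^3\,q^{3k}$ at $(p^r-1)/3$ to its Frobenius twist $q\mapsto q^{p}$ truncated at $(p^{r-1}-1)/3$. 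Letting $a,b\to1$ converts the product into $\Phi_{p^r}(q)^3$ and, after specialising $n=p^r$ and running the same L'H\^{o}pital computation as in the proof of Theorem \ref{thm-a}, recovers one factor $p^3$; iterating the Frobenius step up the tower $\Phi_p(q),\Phi_{p^2}(q),\dots,\Phi_{p^r}(q)$, whose values at $q=1$ multiply to $p^{r}$, is what would furnish the remaining powers.

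On the $p$-adic side the argument of Section 3 must be pushed to higher accuracy. The factor $\Gamma_p(1/3)^6$ would again be isolated by the Gamma-quotient manipulation of \eqref{eq:wei-aaa}, but to reach precision $p^{3r}$ one has to carry the Taylor expansion of $\Gamma_p$ to order $2r$; the logarithmic-derivative functions $G_1,G_2$ borrowed from Pan, Tauraso and Wang are then accompanied by higher analogues $G_3,\dots,G_{2r}$, and the elementary harmonic reductions of \eqref{eq:wei-ddd} must be replaced by their depth-$r$ versions involving $H^{(2j)}$ and multiple harmonic sums taken modulo $p^{2r}$. This is laborious but, in principle, mechanical once the $q$-level input is in hand.

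The hard part will be the uniformity in $r$, i.e.\ the ascent just described. The $q$-Saalsch\"{u}tz identity \eqref{saal} is a terminating single-sum evaluation tuned to one cyclotomic modulus; it contains no intrinsic mechanism for climbing the tower, so the decisive ingredient must be a genuine $q$-Dwork (Frobenius) lemma matching each length-$(p^r-1)/3$ block with the corresponding term of the shorter sum while preserving cubic accuracy at every level. Establishing that lemma is essentially the whole content of the conjecture for $r\geq2$ and is the exact $(1/3)$-analogue of the difficulty in Swisher's $(1/2)$-conjecture displayed just above. I would expect it to require either a $p$-adic/formal-group analysis of the underlying ${}_3\phi_2$ beyond what \eqref{saal} provides, or a new terminating transformation feeding several cyclotomic factors $\Phi_{p^{r}}(q)^3\Phi_{p^{r-1}}(q)^{3}\cdots$ at once; once available, the $q$-microscope and the extended $p$-adic Gamma bookkeeping above would assemble the result.
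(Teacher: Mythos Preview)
The statement you are addressing is labelled \emph{Conjecture} in the paper, not \emph{Theorem}: the authors offer it ``on the basis of numerical calculations'' and supply no proof whatsoever. There is therefore no argument in the paper to compare your proposal against.

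As for the proposal itself, it is not a proof but a strategy sketch, and you are candid about this. Your analysis of the $r=1$ case is accurate: it is exactly the $p\equiv1\pmod6$ branch of \eqref{eq:long}, recovered in the paper via Theorem~\ref{thm-a} at $d=3$ together with Proposition~\ref{prop-a}. You are also right that Theorem~\ref{thm-a} applied at $n=p^r$ yields only modulus $p^3$ after $q\to1$, since $\Phi_{p^r}(1)=p$, so the paper's machinery gives no purchase on the modulus $p^{3r}$ for $r\geq2$. The decisive missing ingredient you isolate --- a Dwork/Frobenius-type lemma relating the truncation at $(p^r-1)/3$ to the one at $(p^{r-1}-1)/3$ with cubic accuracy at each cyclotomic level --- is precisely the content of the conjecture, and you say so explicitly. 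That is an honest assessment, but it means your proposal does not close the gap: the $q$-Saalsch\"utz identity \eqref{saal} and the Chinese remainder argument of Theorem~\ref{thm-c} are single-level tools, and nothing you have written supplies the ascent. The conjecture remains open both in the paper and in your proposal.
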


{\bf{Acknowledgments}}\\

The work is supported by the National Natural Science Foundations of China (Nos. 12071103 and
11661032).



\begin{thebibliography}{99}
\small \setlength{\itemsep}{-.8mm}

\bibitem{Gasper}G. Gasper, M. Rahman, Basic Hypergeometric Series (2nd edition),
Cambridge University Press, Cambridge, 2004.

\bibitem{Guo-fac}V.J.W. Guo,
Factors of some truncated basic hypergeometric hypergeometric
series, J. Math. Anal. Appl. 476 (2019), 851--859.

\bibitem{Guo-adb}V.J.W. Guo, $q$-Supercongruences modulo the fourth power of a cyclotomic polynomial via creative microscoping, Adv. Appl. Math. 120 (2020), Art. 102078.

\bibitem{Guo-rima}V.J.W. Guo,
Proof of some $q$-supercongruences modulo the fourth power of a  cyclotomic polynomial,
Results Math. 75 (2020), Art. 77.

\bibitem{Guo-ijnt}V.J.W. Guo, A further $q$-analogue of Van Hamme's (H.2) supercongruence for primes $p\equiv 3\pmod{4}$, Int. J. Number Theory, to appear.

\bibitem{GS20} V.J.W. Guo and M.J. Schlosser, A new family of $q$-supercongruences modulo the fourth power of a cyclotomic polynomial,
Results Math. 75 (2020), Art. 155.

\bibitem{GS20c}
V.J.W. Guo, M.J. Schlosser, A family of $q$-hypergeometric
congruences modulo the fourth power of a cyclotomic polynomial,
Israel J. Math., to appear.

\bibitem{GS}V.J.W. Guo, M.J. Schlosser, Some $q$-supercongruences from transformation formulas for basic
hypergeometric series, Constr. Approx., to appear.

\bibitem{GuoZu}V.J.W. Guo, W. Zudilin, A $q$-microscope for supercongruences, Adv. Math. 346 (2019), 329--358.

\bibitem{LW}L. Li, S.-D. Wang, Proof of a $q$-supercongruence conjectured by Guo and Schlosser,
Rev. R. Acad. Cienc. Exactas Fs. Nat., Ser. A Mat. RACSAM 114 (2020), Art. 190.

\bibitem{LP}J.-C. Liu, F. Petrov, Congruences on sums of $q$-binomial coefficients, Adv. Appl. Math. 116 (2020), Art.~102003.

\bibitem{LR} L. Long, R. Ramakrishna, Some supercongruences occurring in truncated hypergeometric series, Adv. Math. 290 (2016), 773--808.

\bibitem{PTW}H. Pan, R. Tauraso, C. Wang, A local-global theorem for $p$-adic supergruences, preprint, 2020,
arXiv:1909.08183v1.

\bibitem{Robert} A.M. Robert, A Course in $p$-Adic Analysis, Graduate Texts in Mathematics, SpringerVerlag, New York, 2000.

\bibitem{Swisher}H. Swisher, On the supercongruence conjectures of Van Hamme, Res. Math. Sci. 2 (2015), Art. 18.

\bibitem{Tauraso}R. Tauraso, $q$-Analogs of some congruences involving Catalan numbers, Adv. Appl. Math. 48 (2009), 603--614.

\bibitem{Wang}C. Wang, H. Pan, Supercongruences concerning truncated hypergeometric series, preprint, 2018,
arXiv:1806.02735v2.

\bibitem{WY-a}X. Wang, M. Yue, Some $q$-supercongruences from Watson's
$_8\phi_7$ transformation formula, Results Math. 75 (2020), Art.~71.

\bibitem{Zu19}W. Zudilin, Congruences for $q$-binomial coefficients, Ann. Combin. 23 (2019), 1123--1135.

\end{thebibliography}
\end{document}